\newtheorem{theorem}{Theorem}[section]
\newtheorem{lemma}[theorem]{Lemma}
\newtheorem{proposition}[theorem]{Proposition}
\theoremstyle{definition}
\newtheorem{definition}[theorem]{Definition}
\newtheorem{example}[theorem]{Example}
\theoremstyle{remark}
\numberwithin{equation}{section}
\begin{document}
	
	\setcounter{page}{1}
	
	\title[Bessel sequences in Hilbert $C^{\ast}$-modules]{Bessel sequences in Hilbert $C^{\ast}$-modules}

	\author[A. Karara, K. Mabrouk]{Abdelilah Karara$^{*}$ and Khadija Mabrouk}
	
	\address{Department of Mathematics Faculty of Sciences, University of Ibn Tofail, Kenitra, Morocco}
	\email{\textcolor[rgb]{0.00,0.00,0.84}{abdelilah.karara@uit.ac.ma; khadija.mabrouk@uit.ac.ma}}

	\subjclass[2020] {42C15.}
	
	\keywords{Frame, Bessel sequence, synthesis operator, stability.}
	
	\date{
		\newline \indent $^{*}$Corresponding author}

	\begin{abstract} In this peaper we stady certain Bessel sequences $\left\{f_k\right\}_{k=1}^{\infty}$ in Hilbert C*- modules $\mathcal{H}$ for which operator $S$ defined by \ref{eq2} is of the form $\mathcal{T}+\xi I$, for some real number $\xi$ and a adjointable linear operator $\mathcal{T}$. Additionally, we investigate frames known as compact-tight frames, which have frame operators that are compact perturbations of constant multiples of the identity. As a conclusion, we provide a theory regarding the weaving of specific compact-tight frames.
	\end{abstract}
	\maketitle
	\section{Introduction }
	
The notion of frame is a recent active mathematical research topic, signal processing, computer science, etc. Frames for Hilbert spaces were first introduced in \textbf{1952} by Duffin and Schaefer \cite{Duf} for study of nonharmonic Fourier series. Daubechies, Grossmann, and Meyer \cite{Dau} revived and developed them in \textbf{1986}, and popularized from then on.

 Many mathematicians have recently generalized frame theory from Hilbert spaces to Hilbert $C^*$-modules. For find details of frames in Hilbert $C^*$-modules we refer to \cite{Ghiati, Kho2,Mass,  Pas, r1, ROSS, ROS, Sah}. The purpose of this paper is to investigate.

Throughout this paper, Let $\mathcal{H}$ be a countably generated Hilbert $\mathcal{A}$-module and   $\{\mathcal{K}_k\}_{k \in \mathbb{Z}}$ be the collection of Hilbert $\mathcal{A}$-module, we also reserve the notation $\operatorname{End}_\mathcal{A}^*\left(\mathcal{H}, \mathcal{K}_k\right)$ for the collection of all adjointable $\mathcal{A}$-linear maps from $\mathcal{H}$ to $\mathcal{K}_{k}$ and $\operatorname{End}_\mathcal{A}^*(\mathcal{H}, \mathcal{H})$ is denoted by $\operatorname{End}_\mathcal{A}^*(\mathcal{H})$.  We will use $\mathcal{N}(\mathcal{T})$  for the kernel of $\mathcal{T}$.
$$ \ell^2= \bigg \{ \{f_k \}_{k \in \mathbb{N}} \,:  \,\,\,\,\,  \|\sum_{k \in \mathbb{N}} \langle f_k ,f_k \rangle \|< \infty \bigg \}$$ that is, $\mathcal{T}( \{f_k \}_{k \in \mathbb{N}})= \{f_{k +1}\}_{k \in \mathbb{N}}.$
Let $f=\left\{f_k\right\}_{k \in \mathbb{N}}$ and $g=\left\{g_k\right\}_{k \in \mathbb{N}}$,  the inner product is defined by $\langle f, g\rangle=\sum_{k \in \mathbb{N}}\left\langle f_k, g_k\right\rangle$, clearly $\ell^2$ is a Hilbert $\mathcal{A}$-module.

In the following we briefly recall the definitions and basic properties of Hilbert $C^{\ast}$-modules.

\begin{definition}\cite{Kap}.
	Let $ \mathcal{A} $ be a unital $C^{\ast}$-algebra and $\mathcal{H}$ be a left $ \mathcal{A} $-module, such that the linear structures of $\mathcal{A}$ and $\mathcal{H}$ are compatible. $\mathcal{H}$ is a pre-Hilbert $\mathcal{A}$-module if $\mathcal{H}$ is equipped with an $\mathcal{A}$-valued inner product $\langle.,.\rangle :\mathcal{H}\times \mathcal{H}\rightarrow\mathcal{A}$, such that is sesquilinear, positive definite and respects the module action. In the other words,  
	\begin{itemize}
		\item [(i)] $ \langle x,x\rangle\geq0 $ for all $ x\in \mathcal{H}$ and $ \langle x,x\rangle=0$ if and only if $x=0$.
		\item [(ii)] $\langle ax+y,z\rangle=a\langle x,z\rangle+\langle y,z\rangle$ for all $a\in\mathcal{A}$ and $x,y,z\in \mathcal{H}$.
		\item[(iii)] $ \langle x,y\rangle=\langle y,x\rangle^{\ast} $ for all $x,y\in \mathcal{H}$.
	\end{itemize}	 

For $x\in \mathcal{H}$, we define $||x||=||\langle x,x\rangle||^{\frac{1}{2}}$. If $\mathcal{H}$ is complete with $||.||$  is a norm on $\mathcal{H}$, it is called a Hilbert $\mathcal{A}$-module or a Hilbert $C^{\ast}$-module over $\mathcal{A}$. For every $a$ in $C^{\ast}$-algebra $\mathcal{A}$, we have $||a||=(a^{\ast}a)^{\frac{1}{2}}.$ 

\end{definition}

\begin{lemma}\cite{Pas}.
	Let $\mathcal{H}$ be Hilbert $\mathcal{A}$-module. If $\mathcal{T}\in End_{\mathcal{A}}^{\ast}(\mathcal{H})$, then $$\langle \mathcal{T}x,\mathcal{T}x\rangle_{\mathcal{A}}\leq\|\mathcal{T}\|^{2}\langle x,x\rangle_{\mathcal{A}}, \forall x\in\mathcal{H}.$$
\end{lemma}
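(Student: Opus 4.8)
The plan is to reduce the asserted inequality to a positivity statement inside the $C^{\ast}$-algebra $\operatorname{End}_{\mathcal{A}}^{\ast}(\mathcal{H})$. The first step is the elementary identity $\langle \mathcal{T}x,\mathcal{T}x\rangle_{\mathcal{A}}=\langle \mathcal{T}^{\ast}\mathcal{T}x,x\rangle_{\mathcal{A}}$ for every $x\in\mathcal{H}$: this follows from the defining property of the adjoint together with the fact that $\langle y,y\rangle_{\mathcal{A}}$ is self-adjoint for all $y$ (axioms (i) and (iii)). With this identity, proving the lemma is the same as proving
$$\big\langle\big(\|\mathcal{T}\|^{2}I-\mathcal{T}^{\ast}\mathcal{T}\big)x,\,x\big\rangle_{\mathcal{A}}\ \ge\ 0\qquad\text{for all }x\in\mathcal{H}.$$

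Next I would invoke the standard facts that $\operatorname{End}_{\mathcal{A}}^{\ast}(\mathcal{H})$ is a unital $C^{\ast}$-algebra (with unit $I$), that $\mathcal{T}^{\ast}\mathcal{T}$ is a positive element of it, and that by the $C^{\ast}$-identity $\|\mathcal{T}^{\ast}\mathcal{T}\|=\|\mathcal{T}\|^{2}$. Since $\mathcal{T}^{\ast}\mathcal{T}$ is self-adjoint, its spectrum is contained in $[0,\|\mathcal{T}^{\ast}\mathcal{T}\|]=[0,\|\mathcal{T}\|^{2}]$, so by continuous functional calculus applied to the function $t\mapsto\|\mathcal{T}\|^{2}-t$ the operator $\|\mathcal{T}\|^{2}I-\mathcal{T}^{\ast}\mathcal{T}$ has spectrum in $[0,\|\mathcal{T}\|^{2}]$ and is therefore a positive element of $\operatorname{End}_{\mathcal{A}}^{\ast}(\mathcal{H})$. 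Being positive in a $C^{\ast}$-algebra, it admits a self-adjoint square root, i.e. there is $R=\big(\|\mathcal{T}\|^{2}I-\mathcal{T}^{\ast}\mathcal{T}\big)^{1/2}\in\operatorname{End}_{\mathcal{A}}^{\ast}(\mathcal{H})$ with $R^{\ast}R=\|\mathcal{T}\|^{2}I-\mathcal{T}^{\ast}\mathcal{T}$.

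Finally I would conclude: for any $x\in\mathcal{H}$,
$$\big\langle\big(\|\mathcal{T}\|^{2}I-\mathcal{T}^{\ast}\mathcal{T}\big)x,\,x\big\rangle_{\mathcal{A}}=\langle R^{\ast}Rx,x\rangle_{\mathcal{A}}=\langle Rx,Rx\rangle_{\mathcal{A}}\ge 0$$
by axiom (i), and expanding the left-hand side via $\mathcal{A}$-linearity of the inner product together with the identity from the first step gives $\|\mathcal{T}\|^{2}\langle x,x\rangle_{\mathcal{A}}-\langle\mathcal{T}x,\mathcal{T}x\rangle_{\mathcal{A}}\ge 0$, which is the assertion.

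The only genuinely non-formal ingredient — and the step I expect to need the most care in citing correctly — is the implication ``$S$ self-adjoint $\Longrightarrow$ $-\|S\|I\le S\le\|S\|I$'' in $\operatorname{End}_{\mathcal{A}}^{\ast}(\mathcal{H})$, equivalently the positivity of $\|S\|I-S$; this is exactly where the $C^{\ast}$-algebra structure enters (spectral radius equals norm for self-adjoint elements, and functional calculus in the commutative $C^{\ast}$-subalgebra generated by $S$ and $I$). A more self-contained variant would avoid functional calculus by approximating $\big(\|\mathcal{T}\|^{2}I-\mathcal{T}^{\ast}\mathcal{T}\big)^{1/2}$ in norm by polynomials in $\mathcal{T}^{\ast}\mathcal{T}$, but invoking the standard spectral description of positivity in $\operatorname{End}_{\mathcal{A}}^{\ast}(\mathcal{H})$ is cleaner and suffices here; everything else is bookkeeping with the module axioms.
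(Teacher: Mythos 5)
Your proof is correct: reducing to the positivity of $\|\mathcal{T}\|^{2}I-\mathcal{T}^{\ast}\mathcal{T}$ in the $C^{\ast}$-algebra $\operatorname{End}_{\mathcal{A}}^{\ast}(\mathcal{H})$ and extracting a square root $R$ so that the quantity becomes $\langle Rx,Rx\rangle_{\mathcal{A}}\ge 0$ is exactly the standard argument (it is Paschke's original one). The paper itself gives no proof — the lemma is simply quoted from \cite{Pas} — so there is nothing to compare against beyond noting that your argument is the canonical one and is complete.
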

\begin{definition}\cite{Kap} A family $ \{f_{k}\}_{k\in \Theta} $ of elements of $ \mathcal{H} $, if there exist two positive constants $ A $ and $ B $ such that for all $ f\in \mathcal{H} $, 
	\begin{equation}\label{eq1}
		A\langle f,f\rangle_{\mathcal{A}}\leq \sum_{k\in \Theta}  \langle f,f_{k}\rangle_{\mathcal{A}} \langle f_{k},f\rangle_{\mathcal{A}} \leq B \langle f,f\rangle_{\mathcal{A}}.
	\end{equation} 
	The numbers $ A $ and $ B $ are called lower and upper  bounds of the frame, respectively. If $ A = B = \alpha $, the frame is $ \alpha- $tight. If $ A= B=1 $, it is called a normalized tight frame or a Parseval frame.
\end{definition}	
	Let $\mathfrak{F}=\left\{f_k\right\}_{k=1}^{\infty}$ be a frame for $\mathcal{H}$. The operator 
$$
\mathcal{T}_\mathfrak{F}: \ell^2 \rightarrow \mathcal{H}, \quad \mathcal{T}_{\mathfrak{F}}\left(\left\{\xi_k\right\}_{k \in I}\right)=\sum_{k=1}^{\infty} \xi_k f_k,
$$
is called the synthesis operator of $\mathfrak{F}$, and
$$
\mathcal{T}_\mathfrak{F}^*: \mathcal{H} \rightarrow \ell^2, \quad \mathcal{T}_\mathfrak{F}^* f=\left\{\left\langle f, f_k\right\rangle\right\}_{k \in I}
$$
is called the analysis operator of $\mathfrak{F}$. The frame operator $S: \mathcal{H} \rightarrow \mathcal{H}$, defined by
\begin{equation} \label{eq2}
S f=\mathcal{T}_\mathfrak{F} \mathcal{T}_\mathfrak{F}^* f=\sum_{k=1}^{\infty}\left\langle f, f_k\right\rangle f_k,
\end{equation}

is a positive, self-adjoint, and invertible operator. 

\begin{definition}
Let $\mathcal{H}$ be a finitely or countably generated Hilbert $\mathcal{A}$-module $\mathcal{H}$ over a unital $C^*$-algebra $\mathcal{A}$, and let $\mathfrak{F}=\left\{f_k\right\}_{k=1}^{\infty}$ and $\mathfrak{G}=\left\{g_k\right\}_{k=1}^{\infty}$ be Bessel sequences in $\mathcal{H}$. If for $\mu>0$,
$$
\left\|\mathcal{T}_\mathfrak{F}-\mathcal{T}_\mathfrak{G}\right\| \leq \mu .
$$  Then $\mathfrak{G}$ is called a $\mu$-perturbation of $\mathfrak{F}$ 
\end{definition}

\begin{definition}\cite{Zah}
For a given natural number $m$. A finite family of frames $\left\{\left\{x_{i j}\right\}_{j \in \mathbb{J}}, i \in \lbrace 1,\ldots , m \rbrace \right\}$ in a Hilbert $\mathcal{A}$-module $\mathcal{H}$ is said to be woven if there are universal constants $C$ and $D$ such that, for every partition $\left\{\sigma_1, \ldots, \sigma_m\right\}$ of $\mathbb{J}$, the family $\left\{x_{i j}\right\}_{i=1, j \in \sigma_i}^m$ is a frame for $\mathcal{H}$ with lower and upper frame bounds $C$ and $D$, respectively. In this case, we usually call $\left\{\left\{x_{i j}\right\}_{j \in \mathbb{J}}, i \in \lbrace 1,\ldots , m \rbrace \right\}$ woven with universal bounds $(C, D)$. Each family $\left\{x_{i j}\right\}_{i=1, j \in \sigma_i}^m$ is called a weaving.
\end{definition}

\section{From Bessel sequences to frames}
\begin{proposition}\label{P31}
 Let $\left\{f_k\right\}_{k=1}^{\infty}$ be a Bessel sequence in $\mathcal{H}$ with bound $\eta$ and $S$ be the frame operator. if we can write $S$ in the following form $S=\mathcal{T}+\xi I$ for some real number $\xi$ and a linear operator $\mathcal{T}$, then we have:
 \begin{enumerate}
 \item $\left\{f_k\right\}_{k=1}^{\infty}$ is a frame. If $\mathcal{T}$ is a positive operator and $\xi>0$,
 \item $\mathcal{T}$ is bounded and self-adjoint.

 \item Let $A>0$, the operator $\mathcal{T}$ is a positive whenever $\xi \leq A$. If $\left\{f_k\right\}_{k=1}^{\infty}$ is a frame with lower bound $A$
 \end{enumerate}
\end{proposition}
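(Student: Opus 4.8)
The plan is to reduce all three assertions to order manipulations of the identity $S=\mathcal{T}+\xi I$ inside $\operatorname{End}_\mathcal{A}^*(\mathcal{H})$, treating item (2) first because it certifies that $\mathcal{T}$ lies in the right class of operators. I would start by recording the standard facts about a Bessel sequence $\{f_k\}_{k=1}^{\infty}$ with bound $\eta$: the analysis map $f\mapsto\{\langle f,f_k\rangle_{\mathcal{A}}\}_k$ is a well-defined adjointable operator whose adjoint is the synthesis operator $\mathcal{T}_{\mathfrak{F}}$, so that $S=\mathcal{T}_{\mathfrak{F}}\mathcal{T}_{\mathfrak{F}}^{*}$ is a bounded, self-adjoint, positive element of $\operatorname{End}_\mathcal{A}^*(\mathcal{H})$ with
\begin{equation*}
\langle Sf,f\rangle_{\mathcal{A}}=\sum_{k=1}^{\infty}\langle f,f_k\rangle_{\mathcal{A}}\langle f_k,f\rangle_{\mathcal{A}}\le \eta\,\langle f,f\rangle_{\mathcal{A}},\qquad f\in\mathcal{H}.
\end{equation*}
This already hands us the upper frame bound $\eta$ in every part, so only lower bounds remain to be discussed. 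I will also use the standard fact that a self-adjoint adjointable operator $\mathcal{R}$ is positive precisely when $\langle \mathcal{R}f,f\rangle_{\mathcal{A}}\ge 0$ for all $f\in\mathcal{H}$, and that positivity is preserved under sums and under multiplication by nonnegative reals.

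For (2), I would simply write $\mathcal{T}=S-\xi I$; since $\xi\in\mathbb{R}$, the operator $\xi I$ is bounded and self-adjoint, so the difference $\mathcal{T}$ is bounded and self-adjoint, in particular adjointable. For (1), assuming $\mathcal{T}\ge 0$ and $\xi>0$, I would compute, for every $f\in\mathcal{H}$,
\begin{equation*}
\sum_{k=1}^{\infty}\langle f,f_k\rangle_{\mathcal{A}}\langle f_k,f\rangle_{\mathcal{A}}=\langle Sf,f\rangle_{\mathcal{A}}=\langle \mathcal{T}f,f\rangle_{\mathcal{A}}+\xi\,\langle f,f\rangle_{\mathcal{A}}\ge \xi\,\langle f,f\rangle_{\mathcal{A}},
\end{equation*}
which is exactly the lower inequality in \eqref{eq1} with constant $\xi$; combined with the Bessel bound $\eta$ this shows $\{f_k\}_{k=1}^{\infty}$ is a frame with bounds $(\xi,\eta)$.

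For (3), if $\{f_k\}_{k=1}^{\infty}$ is a frame with lower bound $A>0$, then $\langle Sf,f\rangle_{\mathcal{A}}\ge A\langle f,f\rangle_{\mathcal{A}}$ for all $f$, i.e. the self-adjoint operator $S-AI$ is positive. Decomposing
\begin{equation*}
\mathcal{T}=S-\xi I=(S-AI)+(A-\xi)I,
\end{equation*}
one observes that when $\xi\le A$ the summand $(A-\xi)I$ is a nonnegative multiple of the identity, hence positive; being a sum of two positive operators, $\mathcal{T}$ is positive.

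The only delicate point — really the sole obstacle — is the preliminary paragraph: one must be sure that for a Bessel sequence not yet assumed to be a frame the series defining $S$ in \eqref{eq2} converges and produces an adjointable, positive operator on the Hilbert $C^{*}$-module, and that the characterisation of positivity through $\mathcal{A}$-valued quadratic forms is legitimate in this setting. These are classical in the Hilbert $C^{*}$-module literature; granting them, items (1)--(3) follow from the short computations above with no further difficulty.
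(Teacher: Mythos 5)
Your proposal is correct and follows essentially the same route as the paper: part (2) via $\mathcal{T}=S-\xi I$ with $S$ bounded and self-adjoint, part (1) by dropping the positive term $\langle\mathcal{T}f,f\rangle_{\mathcal{A}}$ to get the lower bound $\xi$, and part (3) by checking $\langle\mathcal{T}f,f\rangle_{\mathcal{A}}\ge(A-\xi)\langle f,f\rangle_{\mathcal{A}}\ge 0$ (your decomposition $\mathcal{T}=(S-AI)+(A-\xi)I$ is just an operator-level restatement of the paper's quadratic-form computation). If anything, your write-up is slightly more careful than the paper's, which misstates the Bessel bound as $1$ in its proof of (2) and has a typo $\langle f,f_k\rangle$ where $\langle f,f\rangle$ is meant in (3).
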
  
\begin{proof}

(1) For every $f \in \mathcal{H}$, 
$$\sum_{k=1}^{\infty}\left\langle f, f_k\right\rangle \left\langle f_k , f\right\rangle=\langle S f, f\rangle= \langle \xi f, f\rangle+\langle T f, f\rangle  $$
Since $\mathcal{T}$ is a positive operator then,
$$
\xi\left\langle f, f\right\rangle\leq\sum_{k=1}^{\infty}\left\langle f, f_k\right\rangle \left\langle f_k , f\right\rangle \leq\eta \left\langle f, f\right\rangle 
$$
which implies the sequence $\left\{f_k\right\}_{k=1}^{\infty}$ is a frame for $\mathcal{H}$.

(2) for $f\in \mathcal{H}$ we have, $$\sum_{k=1}^{\infty}\left\langle f, f_k\right\rangle \left\langle f_k , f\right\rangle\leq \left\langle f , f\right\rangle.$$ Thus $S$ is a bounded operator. Hence, $S-\xi I$ is also bounded. Since $\xi$ is a real number the operator $\mathcal{T}$ satisfies the following equalities $$\mathcal{T}=\mathcal{T}^{*}=S-\xi I$$ is obvious.

(3) For every $f \in \mathcal{H}$,
$$
\begin{aligned}
\langle \mathcal{T} f, f\rangle & =\langle S f, f\rangle-\langle \xi f, f\rangle \\
& \geq A\left\langle f, f\right\rangle-\xi\left\langle f, f\right\rangle
\\
& \geq(A-\xi)\left\langle f, f_k\right\rangle
\end{aligned}
$$
Morever, the hypothesis $\xi \leq A$ shows that for every $f \in \mathcal{H}$, $$\langle T f, f\rangle \geq 0.$$ 
\end{proof}

In the following theorem we admit the following hypothesis:

\begin{equation}\label{eq31}
\|f\|\|g\| \leq \sqrt{1+\eta^2}\|\langle f, g\rangle\| \Longleftrightarrow\|\alpha f-g\| \leq \sqrt{\frac{\eta^{2}}{1+\eta^2}}\|g\|, \quad f,g\in\mathcal{H}.
\end{equation}
Where $\alpha\in\mathbb{R}$ and $\eta\geq0$. 
\begin{theorem}
 Let $\mathcal{T}\in\operatorname{End}_\mathcal{A}^*(\mathcal{H})$ and $\left\{f_k\right\}_{k=1}^{\infty}\subset\mathcal{H}$ such that ,
$$
\sum_{k=1}^{\infty}\left\langle f, f_k\right\rangle f_k=\mathcal{T} f+\xi f,\quad \forall f \in \mathcal{H}
$$
where $\xi$ is a real scalar.
\begin{enumerate}
\item The famille $\left\{f_k\right\}_{k=1}^{\infty}$ is a Bessel sequence.
\item Suppose that  $\alpha$ is a real number and $\eta \geq 0$ are such that
\begin{equation}\label{eh}
\|\alpha f-\mathcal{T} f\| \leq \sqrt{\frac{\eta^{2}}{1+\eta^2}}\|\mathcal{T} f\|,\quad \forall f \in \mathcal{H}.
\end{equation}
 If $\mathcal{T}$ is self-adjoint and bounded from below by $\rho$ with
$$
\sqrt{\frac{\rho^{2}}{1+\eta^2}}-|\xi|>0
$$
then $\left\{f_k\right\}_{k=1}^{\infty}$ is a frame.
\item With (\ref{eh}), assume that $\left\{g_k\right\}_{k=1}^{\infty}$ is a $\mu$-perturbation of $\left\{f_k\right\}_{k=1}^{\infty}$. If $\mu<\sqrt{\sqrt{\frac{\rho^{2}}{1+\eta^2}}-|\xi|}$, then $\left\{g_k\right\}_{k=1}^{\infty}$ is a frame for $\mathcal{H}$ with bounds
$$
\left(\left(\sqrt{\frac{\rho^{2}}{1+\eta^2}}-|\xi|\right)^{\frac{1}{2}}-\mu\right)^2 \text { and }(\mu+(\|\mathcal{T}\|+|\xi|)^{\frac{1}{2}})^2 \text {. }
$$
\end{enumerate}
\end{theorem}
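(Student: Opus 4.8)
The plan is to extract everything from the single positive operator $S$ defined by $Sf=\sum_{k=1}^{\infty}\langle f,f_k\rangle f_k=\mathcal{T}f+\xi f$, using throughout the standard fact that a positive operator in $\operatorname{End}_\mathcal{A}^*(\mathcal{H})$ which is bounded below in norm is invertible, and that for such an operator one has $\|S^{-1}\|^{-1}I\le S\le\|S\|I$; this is the bridge that turns norm estimates into $\mathcal{A}$-valued order estimates. For part (1): since $\mathcal{T}$ is adjointable it is bounded, so $S=\mathcal{T}+\xi I$ is bounded with $\|S\|\le\|\mathcal{T}\|+|\xi|$; pairing with $f$ gives $\langle Sf,f\rangle=\sum_{k}\langle f,f_k\rangle\langle f_k,f\rangle\ge 0$, so $S$ is positive, hence self-adjoint, hence $\mathcal{T}=S-\xi I$ is self-adjoint (used in (2)). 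Then $\sum_{k}\langle f,f_k\rangle\langle f_k,f\rangle=\langle Sf,f\rangle\le\|S\|\langle f,f\rangle\le(\|\mathcal{T}\|+|\xi|)\langle f,f\rangle$, so $\{f_k\}$ is Bessel with bound $\|\mathcal{T}\|+|\xi|$.

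For part (2), the first step is to rewrite (\ref{eh}): applying the equivalence (\ref{eq31}) to the pair $(f,\mathcal{T}f)$ shows that (\ref{eh}) is equivalent to $\|f\|\,\|\mathcal{T}f\|\le\sqrt{1+\eta^2}\,\|\langle f,\mathcal{T}f\rangle\|$ for all $f$. Since $\mathcal{T}=\mathcal{T}^*$ we have $\|\langle f,\mathcal{T}f\rangle\|=\|\langle\mathcal{T}f,f\rangle\|$, and combining with the lower bound $\|\mathcal{T}f\|\ge\rho\|f\|$ gives $\|\langle\mathcal{T}f,f\rangle\|\ge\sqrt{\tfrac{\rho^2}{1+\eta^2}}\,\|f\|^2$. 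Writing $\langle\mathcal{T}f,f\rangle=\langle Sf,f\rangle-\xi\langle f,f\rangle$ and using $\|\xi\langle f,f\rangle\|=|\xi|\,\|f\|^2$ with the triangle inequality yields $\|\langle Sf,f\rangle\|\ge\bigl(\sqrt{\tfrac{\rho^2}{1+\eta^2}}-|\xi|\bigr)\|f\|^2=:A\|f\|^2$, with $A>0$ by hypothesis. By the Cauchy--Schwarz inequality $\|Sf\|\,\|f\|\ge\|\langle Sf,f\rangle\|\ge A\|f\|^2$, so $\|Sf\|\ge A\|f\|$; thus the positive operator $S$ is bounded below, hence invertible with $\|S^{-1}\|\le A^{-1}$, and therefore $A\langle f,f\rangle\le\langle Sf,f\rangle$. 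Together with (1) this shows $\{f_k\}$ is a frame with bounds $A$ and $\|\mathcal{T}\|+|\xi|$.

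For part (3) I would run the usual perturbation argument on the analysis operators. By (2), $\{f_k\}$ is a frame with bounds $A=\sqrt{\tfrac{\rho^2}{1+\eta^2}}-|\xi|$ and $B=\|\mathcal{T}\|+|\xi|$; taking norms in the frame inequality (and using that the $C^*$-norm is monotone on positive elements, with $\|\mathcal{T}_\mathfrak{F}^*f\|^2=\|\langle Sf,f\rangle\|$) gives $\sqrt{A}\,\|f\|\le\|\mathcal{T}_\mathfrak{F}^*f\|\le\sqrt{B}\,\|f\|$. Since $\mathfrak{F},\mathfrak{G}$ are Bessel, their synthesis operators are adjointable, and $\|\mathcal{T}_\mathfrak{F}^*-\mathcal{T}_\mathfrak{G}^*\|=\|\mathcal{T}_\mathfrak{F}-\mathcal{T}_\mathfrak{G}\|\le\mu$, so the triangle inequality gives $(\sqrt{A}-\mu)\|f\|\le\|\mathcal{T}_\mathfrak{G}^*f\|\le(\sqrt{B}+\mu)\|f\|$, where $\sqrt{A}-\mu>0$ by the assumption on $\mu$. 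Hence the frame operator $S_\mathfrak{G}=\mathcal{T}_\mathfrak{G}\mathcal{T}_\mathfrak{G}^*$ is positive with $\|\langle S_\mathfrak{G}f,f\rangle\|=\|\mathcal{T}_\mathfrak{G}^*f\|^2$, giving $\|S_\mathfrak{G}f\|\ge(\sqrt{A}-\mu)^2\|f\|$ and $\|S_\mathfrak{G}\|=\|\mathcal{T}_\mathfrak{G}^*\|^2\le(\sqrt{B}+\mu)^2$; exactly as in (2), $S_\mathfrak{G}$ is invertible and $(\sqrt{A}-\mu)^2I\le S_\mathfrak{G}\le(\sqrt{B}+\mu)^2I$, which is the assertion with bounds $\bigl((\sqrt{\tfrac{\rho^2}{1+\eta^2}}-|\xi|)^{1/2}-\mu\bigr)^2$ and $\bigl(\mu+(\|\mathcal{T}\|+|\xi|)^{1/2}\bigr)^2$.

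The only genuinely nontrivial point, and the one I would isolate as a preliminary remark, is the passage from the norm inequalities produced by (\ref{eq31}) and the lower bound on $\mathcal{T}$ to the module-valued order inequalities required by the definition of a frame; this rests entirely on the lemma that a positive adjointable operator bounded below in norm is invertible (so that $\|S^{-1}\|^{-1}I\le S\le\|S\|I$). A minor secondary point, needed only in part (3), is the adjointability of the synthesis operators, which is automatic since $\mathfrak{F}$ and $\mathfrak{G}$ are assumed Bessel; the rest is bookkeeping with the triangle inequality and the $C^*$-identity $\|\mathcal{T}_\mathfrak{G}^*\|^2=\|\mathcal{T}_\mathfrak{G}\mathcal{T}_\mathfrak{G}^*\|$.
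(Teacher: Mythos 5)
Your proof is correct and follows the same overall route as the paper's: part (1) by bounding $\langle Sf,f\rangle$ in terms of $\|\mathcal{T}\|+|\xi|$, part (2) by applying the equivalence (\ref{eq31}) to the pair $(f,\mathcal{T}f)$, using self-adjointness to flip the inner product and the lower bound $\|\mathcal{T}f\|\geq\rho\|f\|$, and part (3) by the standard triangle-inequality perturbation of the analysis operators. The one substantive place where you diverge is exactly the point you isolate yourself: the paper's proof stops at the scalar inequalities
$$\Big(\sqrt{\tfrac{\rho^{2}}{1+\eta^2}}-|\xi|\Big)\,\|\langle f,f\rangle\|\ \leq\ \Big\|\sum_{k}\langle f,f_k\rangle\langle f_k,f\rangle\Big\|\ \leq\ (\|\mathcal{T}\|+|\xi|)\,\|\langle f,f\rangle\|,$$
and simply declares the sequence a frame, whereas the definition (\ref{eq1}) it is working with demands the $\mathcal{A}$-valued order inequalities. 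You close that gap explicitly by observing that $S$ is positive and, once bounded below in norm, invertible with $\|S^{-1}\|^{-1}I\leq S\leq\|S\|I$; the paper is implicitly relying on the known equivalence (for adjointable positive frame operators on countably generated modules) of the norm-form and operator-form frame conditions without citing or proving it. So your argument buys a self-contained justification of the step the paper elides, at the cost of invoking the closed-range/invertibility lemma for positive adjointable operators; otherwise the decompositions, constants, and estimates are identical.
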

\begin{proof}
(1) Using Cauchy-Schwarz inequality we get
$$
\begin{aligned}
\Vert\sum_{k=1}^{\infty}\left\langle f, f_k\right\rangle \left\langle f_k , f\right\rangle\Vert & =\Vert\langle \mathcal{T} f, f\rangle+\langle \xi f, f\rangle\Vert \\
& \leq\Vert\langle \mathcal{T} f, f\rangle\Vert+ |\xi|\|f\|^2\\
& \leq\|\langle \mathcal{T} f, \mathcal{T} f\rangle\|^\frac{1}{2} \Vert\langle f,  f\rangle\Vert^{\frac{1}{2}}+|\xi|\|f\|^2\\
& \leq\|\mathcal{T}\| \Vert\langle f,  f\rangle\Vert +|\xi|\|f\|^2\\
& =(\|\mathcal{T}\|+|\xi|)\Vert\langle f,  f\rangle\Vert.
\end{aligned}
$$
(2) In view of \ref{eq31} and the assumption that $\mathcal{T}\in\operatorname{End}_\mathcal{A}^*(\mathcal{H})$ is self-adjoint we can write
$$
\begin{aligned}
\Vert\sum_{k=1}^{\infty}\left\langle f, f_k\right\rangle \left\langle f_k , f\right\rangle\Vert & =\Vert\langle \xi f, f\rangle+\langle \mathcal{T} f, f\rangle\Vert \\
& \geq\Vert\langle \mathcal{T} f, f\rangle\Vert-|\xi|\Vert\langle f, f\rangle \Vert \\
& =\Vert\langle f, \mathcal{T} f\rangle\Vert-|\xi|\Vert\langle f, f\rangle \Vert\\
& \geq \frac{1}{\sqrt{1+\eta^2}}\|\mathcal{T} f\|\|f\|-|\xi|\Vert\langle f, f\rangle \Vert,
\end{aligned}
$$
Since the operator $\mathcal{T}$ is bounded from below by $\rho$, we get
$$
\begin{aligned}
\Vert\sum_{k=1}^{\infty}\left\langle f, f_k\right\rangle \left\langle f_k , f\right\rangle\Vert 
& \geq \sqrt{\frac{\rho^{2}}{1+\eta^2}}\Vert\langle f, f\rangle \Vert-|\xi|\Vert\langle f, f\rangle \Vert \\
& =\left(\sqrt{\frac{\rho^{2}}{1+\eta^2}}-|\xi|\right)\Vert\langle f, f\rangle \Vert .
\end{aligned}
$$
(3) Since $\left\{f_k\right\}_{k-1}^{\infty}$ is a frame with bounds $\|\mathcal{T}\|+|\xi|$ and $\sqrt{\frac{\rho^{2}}{1+\eta^2}}-|\xi|$, for every $f \in \mathcal{H}$,
$$
\left(\sqrt{\frac{\rho^{2}}{1+\eta^2}}-|\xi|\right)^{\frac{1}{2}}\|f\| \leq\left\|U_{\mathcal{j}}^* f\right\| \leq(|\xi|+\| \mathcal{T} \|)^{\frac{1}{2}}\|f\| .
$$
Therefore,
$$
\left\|\mathcal{T}_\mathfrak{G}^*(f)\right\| \geq\left\|\mathcal{T}_\mathfrak{F}^*(f)\right\|-\left\|\left(\mathcal{T}_\mathfrak{F}(f)-\mathcal{T}_\mathfrak{G}(f)\right)^*\right\| \geq\left(\left(\sqrt{\frac{\rho^{2}}{1+\eta^2}}-|\xi|\right)^{\frac{1}{2}}-\mu\right)\|f\| .
$$
Furthermore,
$$
\left\|U_{\dot{G}}^*(f)\right\| \leq\left\|\left(\mathcal{T}_\mathfrak{F}(f)-\mathcal{T}_\mathfrak{G}(f)\right)^*\right\|+\left\|\mathcal{T}_\mathfrak{F}^*(f)\right\| \leq(\mu+(\|\mathcal{T}\|+|\xi|)^{\frac{1}{2}})\|f\| .
$$
Hence,
$$
\left( \sqrt{\frac{\rho^{2}}{1+\eta^2}}-|\xi|-\mu\right)^2\langle f,f\rangle \leq \sum_{i=1}^{\infty}\left\langle f, g_k\right\rangle \left\langle  g_k,f\right\rangle \leq(\mu+(\|\mathcal{T}\|+|\xi|)^{\frac{1}{2}})^2\langle f,f\rangle .
$$
\end{proof}

\section{Compact and finite-rank-tight frames}
We assume in this section that the compact self-adjoint operator $\mathcal{T}$ is written in the following form

\begin{equation}\label{eq41}
\mathcal{T}=\sum_{k=1}^{\infty} \alpha_k\left\langle\cdot, e_k\right\rangle e_k.
\end{equation}

Where $\left\{e_k\right\}_{k=1}^{\infty}$ is an orthonormal basis for $\mathcal{H}$ formed of the eigenvectors of $\mathcal{T}$. This fact is used to give conditions in the following theorem that enable us to assume a Bessel sequence is a frame.
\begin{theorem}
Let  $\mathcal{T}$ be a compact self-adjoint operator and  $\left\{f_k\right\}_k^{\infty}$ be a Bessel sequence such that for $\xi>0$, $S=\mathcal{T}+\xi I$. Assume that $\mathcal{T}$ has the forme of (\ref{eq41}) such that
$$
\alpha:=\inf _k \alpha_k+\xi>0 .
$$
Then, $\left\{f_k\right\}_{k=1}^{\infty}$ is a frame.
\end{theorem}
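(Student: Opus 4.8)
The plan is to use that $\left\{f_k\right\}_{k=1}^{\infty}$ is already assumed to be a Bessel sequence, so that only a positive lower frame bound has to be produced. By \eqref{eq2} this is equivalent to showing that $\alpha\langle f,f\rangle\le\langle Sf,f\rangle$ for every $f\in\mathcal{H}$, where $\alpha=\inf_k\alpha_k+\xi>0$ is the constant in the statement. Since $S=\mathcal{T}+\xi I$, the task reduces to bounding $\langle\mathcal{T}f,f\rangle$ from below, and here the spectral representation \eqref{eq41} of $\mathcal{T}$ is exactly what is needed.

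First I would fix $f\in\mathcal{H}$, use the orthonormal basis $\{e_k\}_{k=1}^{\infty}$ to write $f=\sum_k\langle f,e_k\rangle e_k$ together with the Parseval-type identity $\langle f,f\rangle=\sum_k\langle f,e_k\rangle\langle e_k,f\rangle$, and plug $f$ into \eqref{eq41} to get $\mathcal{T}f=\sum_k\alpha_k\langle f,e_k\rangle e_k$, hence
$$
\langle\mathcal{T}f,f\rangle=\sum_{k=1}^{\infty}\alpha_k\,\langle f,e_k\rangle\langle e_k,f\rangle .
$$
For each $k$ the element $\langle f,e_k\rangle\langle e_k,f\rangle=\langle f,e_k\rangle\langle f,e_k\rangle^{*}$ is positive in $\mathcal{A}$, and $\alpha_k-(\alpha-\xi)=\alpha_k-\inf_j\alpha_j\ge 0$ is a nonnegative real scalar, so multiplication keeps positivity and
$$
\alpha_k\,\langle f,e_k\rangle\langle e_k,f\rangle\ \ge\ (\alpha-\xi)\,\langle f,e_k\rangle\langle e_k,f\rangle,\qquad k\ge 1 .
$$
Summing over $k$ and invoking the Parseval identity gives $\langle\mathcal{T}f,f\rangle\ge(\alpha-\xi)\langle f,f\rangle$, whence
$$
\langle Sf,f\rangle=\langle\mathcal{T}f,f\rangle+\xi\langle f,f\rangle\ \ge\ (\alpha-\xi)\langle f,f\rangle+\xi\langle f,f\rangle=\alpha\langle f,f\rangle .
$$
Since $\langle Sf,f\rangle=\sum_k\langle f,f_k\rangle\langle f_k,f\rangle$ by \eqref{eq2}, this is precisely the lower inequality in \eqref{eq1} with bound $\alpha>0$; combined with the given Bessel bound $\eta$, the sequence $\left\{f_k\right\}_{k=1}^{\infty}$ is a frame.

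The one point I would spell out carefully — and the only real obstacle — is the passage from the termwise inequality to the summed inequality inside the $C^{\ast}$-algebra: one must check that the series $\sum_k\alpha_k\langle f,e_k\rangle\langle e_k,f\rangle$ and $\sum_k\langle f,e_k\rangle\langle e_k,f\rangle$ converge in norm in $\mathcal{A}$ (the first from adjointability/compactness of $\mathcal{T}$, the second from $\{e_k\}$ being an orthonormal basis), and that the positive cone of $\mathcal{A}$ is norm-closed, so that the order $\ge$ survives these limits. Everything else — the orthonormal-basis expansion, the identity $\langle Sf,f\rangle=\langle\mathcal{T}f,f\rangle+\xi\langle f,f\rangle$, and the final assembly — is routine.
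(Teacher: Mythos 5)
Your proposal is correct and follows essentially the same route as the paper: expand $f$ in the eigenbasis $\{e_k\}$, write $\langle Sf,f\rangle=\sum_k(\alpha_k+\xi)\langle f,e_k\rangle\langle e_k,f\rangle$, and use the positivity of each $\langle f,e_k\rangle\langle e_k,f\rangle$ together with $\alpha_k\ge\inf_j\alpha_j$ and the Parseval identity to obtain the lower bound $\alpha\langle f,f\rangle$. Your added remark about norm convergence of the series and closedness of the positive cone is a point the paper glosses over, but it does not change the argument.
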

\begin{proof}
For every $f \in \mathcal{H}$, we have

$$
\begin{aligned}
Sf&=\mathcal{T}f+\xi f\\
 & =\sum_{k=1}^{\infty} \alpha_k\left\langle f, e_k\right\rangle e_k +\xi f \\
& =\sum_{k=1}^{\infty} \alpha_k\left\langle f, e_k\right\rangle e_k +\xi \sum_{k=1}^{\infty}\left\langle f, e_k\right\rangle e_k\\
& =\sum_{k=1}^{\infty}\left(\alpha_k+\xi\right)\left\langle f, e_k\right\rangle e_k .
\end{aligned}
$$
Consequently,
$$
\begin{aligned}
\sum_{k=1}^{\infty}\left\langle f, f_k\right\rangle \left\langle f_k, f\right\rangle & =\langle S f, f\rangle \\
& =\sum_{k=1}^{\infty}\left(\alpha_k+\xi\right)\left\langle f, e_k\right\rangle \left\langle e_k,f \right\rangle\\
& \geq \sum_{k=1}^{\infty}\left(\inf _k\alpha_k+\xi\right)\left\langle f, e_k\right\rangle \left\langle e_k,f \right\rangle\\
& \geq \alpha\left\langle f,f \right\rangle .
\end{aligned}
$$
Where $\alpha=\xi+\inf_k \alpha_k>0 .$
\end{proof}
\begin{definition}\label{D42}
 We say that a frame is compact-tight. If its frame operator $S$ satisfies the following condition 
 \begin{equation}\label{ED1}
 S=\mathcal{K}+\xi I
 \end{equation}
 where $\mathcal{K}$ being a compact operator.
\end{definition}
The previous definition remains true for finite-rank.

Let $\left\{f_k\right\}_{k=1}^{\infty}$ be a compact-tight frame such that $$S=\mathcal{K}_1+\xi_1 I=\mathcal{K}_2+\xi_2 I,$$ where $\mathcal{K}_1$ and $\mathcal{K}_2$ being compact operators. Then, 
\begin{equation}\label{eq42}
\mathcal{K}_2-\mathcal{K}_1=\left(\xi_1-\xi_2\right) I.
\end{equation}

Thus $$\xi_1=\xi_2.$$  The operator to the right of is indeed a compact operator, but the operator to the left cannot be compact unless $$\xi_1-\xi_2=0.$$ Consequently $$\mathcal{K}_1=\mathcal{K}_2.$$
\begin{proposition}\label{p34}
The representation of the frame operator $S$ as a compact perturbation of the form (\ref{ED1}) is unique. If $\left\{f_k\right\}_{k=1}^{\infty}$ is compact-tight frame.

\end{proposition}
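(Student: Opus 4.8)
The plan is to carry out exactly the computation sketched just before the statement and to supply the one point it leaves implicit. Assume $\{f_k\}_{k=1}^{\infty}$ is a compact-tight frame whose frame operator $S$ has two representations of the form (\ref{ED1}),
$$S=\mathcal{K}_1+\xi_1 I=\mathcal{K}_2+\xi_2 I,$$
with $\mathcal{K}_1,\mathcal{K}_2\in\operatorname{End}_\mathcal{A}^*(\mathcal{H})$ compact and $\xi_1,\xi_2\in\mathbb{R}$. Subtracting the two expressions gives relation (\ref{eq42}), namely $\mathcal{K}_2-\mathcal{K}_1=(\xi_1-\xi_2)I$. The left-hand side is a difference of two compact operators, hence compact, so everything reduces to showing that a real number $\lambda$ with $\lambda I$ compact must vanish.

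This last step is where I expect the only genuine difficulty to lie. I would argue it via the $C^{*}$-algebraic structure: the compact operators $\mathcal{K}(\mathcal{H})$ form a closed two-sided ideal of $\operatorname{End}_\mathcal{A}^*(\mathcal{H})$, obtained as the norm closure of the span of the ``rank-one'' operators $\theta_{x,y}\colon z\mapsto x\langle y,z\rangle$. If $\lambda\neq 0$ and $\lambda I$ were compact, then $I=\lambda^{-1}(\lambda I)\in\mathcal{K}(\mathcal{H})$, so $\mathcal{K}(\mathcal{H})$ would be a unital algebra with unit $I$; this forces $\mathcal{H}$ to be finitely generated, contradicting the standing infinite-dimensional setting (just as the identity on an infinite-dimensional Hilbert space is never compact). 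Hence $\xi_1-\xi_2=0$, i.e. $\xi_1=\xi_2$.

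Plugging $\xi_1=\xi_2$ back into (\ref{eq42}) yields $\mathcal{K}_2-\mathcal{K}_1=0$, so $\mathcal{K}_1=\mathcal{K}_2$. Thus the pair $(\mathcal{K},\xi)$ realizing $S=\mathcal{K}+\xi I$ is completely determined by $S$, which is the asserted uniqueness. The identical argument applies to the finite-rank version noted after Definition \ref{D42}, since a nonzero scalar multiple of the identity cannot have finite rank on a module that is not finitely generated.
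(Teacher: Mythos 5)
Your proposal is correct and follows essentially the same route as the paper, which carries out this exact subtraction in the paragraph preceding the proposition: form $\mathcal{K}_2-\mathcal{K}_1=(\xi_1-\xi_2)I$ as in (\ref{eq42}), observe the left side is compact, and conclude $\xi_1=\xi_2$ and hence $\mathcal{K}_1=\mathcal{K}_2$. The only difference is that you justify the key step---that a nonzero scalar multiple of $I$ cannot be compact when $\mathcal{H}$ is not finitely generated, via the ideal structure of $\mathcal{K}(\mathcal{H})$---whereas the paper merely asserts it.
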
  
 We can formulate the proposition \ref{p34} for tight finite rank frames like this

we say that $\left\{f_k\right\}_{k=1}^{\infty}$ is a $(\xi, \mathcal{K})$-compact-tight frame. If $\left\{f_k\right\}_{k-1}^{\infty}$ is a compact-tight frame such that the frame operator is written in the following form $$S=\mathcal{K}+\xi I,$$ 
The method for creating compact-tight frames based on a given orthonormal basis is described in our following theorem.
\begin{theorem}\label{T4}
 Let $\left\{l_k\right\}_{k=1}^{\infty}$ be a sequence of real numbers greater than $\xi$ such that $\lim _{k \rightarrow \infty} l_k=\xi$ and $\xi$ be a positive real number. If $\left\{e_k\right\}_{k=1}^{\infty}$ is any orthonormal basis of $\mathcal{H}$, then $\left\{l_{k}^{\frac{1}{2}} e_k\right\}_{k=1}^{\infty}$ is a compact-tight frame for $\mathcal{H}$ with frame bounds $\xi$ and $\sup _k l_k$.
\end{theorem}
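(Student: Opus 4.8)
The plan is to compute the frame operator of $\left\{l_k^{1/2}e_k\right\}_{k=1}^{\infty}$ explicitly and then read off both the compact‑tight structure and the bounds. Set $g_k:=l_k^{1/2}e_k$, which is legitimate since $l_k>\xi>0$. Because $l_k^{1/2}$ is a central, self‑adjoint real scalar, for $f\in\mathcal{H}$ one has $\langle f,g_k\rangle g_k=l_k\langle f,e_k\rangle e_k$. First I would check that $\sum_k l_k\langle f,e_k\rangle e_k$ converges in $\mathcal{H}$: the convergent sequence $\{l_k\}$ is bounded, say $L:=\sup_k l_k<\infty$, and for $M>N$ the Cauchy estimate $\bigl\|\sum_{N<k\le M}l_k\langle f,e_k\rangle e_k\bigr\|^2=\bigl\|\sum_{N<k\le M}l_k^2\langle f,e_k\rangle\langle e_k,f\rangle\bigr\|\le L^2\bigl\|\sum_{N<k\le M}\langle f,e_k\rangle\langle e_k,f\rangle\bigr\|$, combined with the Parseval identity $\sum_k\langle f,e_k\rangle\langle e_k,f\rangle=\langle f,f\rangle$ for the orthonormal basis $\{e_k\}$, forces the tail to vanish. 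Hence $Sf:=\sum_k\langle f,g_k\rangle g_k=\sum_k l_k\langle f,e_k\rangle e_k$ defines an adjointable operator on $\mathcal{H}$.

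Next, writing $l_k=(l_k-\xi)+\xi$ and using the reconstruction formula $f=\sum_k\langle f,e_k\rangle e_k$, I would split $S=\mathcal{K}+\xi I$, where $\mathcal{K}f:=\sum_k(l_k-\xi)\langle f,e_k\rangle e_k$. The key point is that $\mathcal{K}$ is compact, i.e.\ a norm limit of finite‑rank adjointable operators. Let $\mathcal{K}_N f:=\sum_{k=1}^{N}(l_k-\xi)\langle f,e_k\rangle e_k$; expanding $\langle(\mathcal{K}-\mathcal{K}_N)f,(\mathcal{K}-\mathcal{K}_N)f\rangle$ and using $\langle e_i,e_j\rangle=\delta_{ij}1_{\mathcal{A}}$ leaves $\sum_{k>N}(l_k-\xi)^2\langle f,e_k\rangle\langle e_k,f\rangle\le\bigl(\sup_{k>N}(l_k-\xi)\bigr)^2\langle f,f\rangle$, so $\|\mathcal{K}-\mathcal{K}_N\|\le\sup_{k>N}(l_k-\xi)\to0$ because $l_k\to\xi$. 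Thus $\mathcal{K}$ is compact (and self‑adjoint), and by Definition \ref{D42} the family $\{g_k\}$ — once it is shown to be a frame — is compact‑tight.

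Finally, for the bounds I would use $\langle Sf,f\rangle=\sum_k l_k\langle f,e_k\rangle\langle e_k,f\rangle$: since $\xi\le l_k\le L$ for all $k$ and each $\langle f,e_k\rangle\langle e_k,f\rangle\ge0$, the Parseval identity gives $\xi\langle f,f\rangle\le\langle Sf,f\rangle\le L\langle f,f\rangle$. As $\langle Sf,f\rangle=\sum_k\langle f,g_k\rangle\langle g_k,f\rangle$, this is exactly the frame inequality \eqref{eq1} with lower bound $\xi$ and upper bound $L=\sup_k l_k$; in particular $\{g_k\}$ is a Bessel sequence, which retroactively justifies the appeal to Definition \ref{D42} above. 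Alternatively, once $\mathcal{K}$ is known to be compact and self‑adjoint, the frame property follows from the theorem of Section 3 applied with $\mathcal{T}=\mathcal{K}$ and $\alpha_k=l_k-\xi$, noting that $\inf_k\alpha_k+\xi=\inf_k l_k=\xi>0$. I expect the only delicate step to be the compactness estimate for $\mathcal{K}$: it hinges on having the Parseval identity $\sum_k\langle f,e_k\rangle\langle e_k,f\rangle=\langle f,f\rangle$ available, which is precisely what the orthonormal‑basis hypothesis supplies, while everything else is a direct manipulation of the $\mathcal{A}$‑valued inner product.
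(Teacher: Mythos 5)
Your proof follows essentially the same route as the paper: the same computation $\langle f,l_k^{1/2}e_k\rangle l_k^{1/2}e_k=l_k\langle f,e_k\rangle e_k$, the same splitting $l_k=(l_k-\xi)+\xi$ yielding $S=\mathcal{K}+\xi I$, and the same derivation of the bounds $\xi$ and $\sup_k l_k$ from the Parseval identity. The only difference is that you supply details the paper leaves implicit --- the Cauchy estimate for convergence of the series and the explicit finite-rank approximation $\|\mathcal{K}-\mathcal{K}_N\|\le\sup_{k>N}(l_k-\xi)\to0$ establishing compactness, the latter being exactly the right notion of compactness in the Hilbert $C^*$-module setting --- so your argument is correct and, if anything, more complete.
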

\begin{proof}
Let $f \in \mathcal{H}$, we have
$$
\sum_{k=1}^{\infty}\left\langle f, l_{k}^{\frac{1}{2}} e_k\right\rangle \left\langle  l_{k}^{\frac{1}{2}} e_k,f\right\rangle =\sum_{k=1}^{\infty} l_k\left\langle f, e_k\right\rangle \left\langle e_k,f \right\rangle
$$
implies that
$$
\xi\langle f,f\rangle\leq \sum_{k=1}^{\infty} \left\langle f, l_{k}^{\frac{1}{2}} e_k\right\rangle \left\langle  l_{k}^{\frac{1}{2}} e_k,f\right\rangle  \leq\left(\sup _k l_k\right)\langle f,f\rangle .
$$
Thus, $\left\{l_{k}^{\frac{1}{2}} e_k\right\}_{k=1}^{\infty}$ is a frame for $\mathcal{H}$ with frame bounds $\xi$ and $\sup _k l_k$. To show that this frame is compact-tight, note that
$$
\begin{aligned}
S f & =\sum_{k=1}^{\infty}\left\langle f, l_{k}^{\frac{1}{2}} e_k\right\rangle l_{k}^{\frac{1}{2}} e_k \\
& =\sum_{k=1}^{\infty} l_k\left\langle f, e_k\right\rangle e_k
\\
& =\sum_{k=1}^{\infty} (l_k+\xi-\xi)\left\langle f, e_k\right\rangle e_k \\
& =\sum_{k=1}^{\infty}\left(l_k-\xi\right)\left\langle f, e_k\right\rangle e_k+\xi \sum_{k=1}^{\infty}\left\langle f, e_k\right\rangle e_k \\
& =\mathcal{K} f +\xi f.
\end{aligned}
$$
Since $\lim _{k \rightarrow \infty} l_k=\xi$, Thus

$$
\mathcal{K}:=\sum_{k=1}^{\infty}\left(l_k-\xi\right)\left\langle., e_k\right\rangle e_k
$$
is a compact operator.
\end{proof}
\begin{example}
Let $\left\{e_k\right\}_{k=1}^{\infty}$ be an orthonormal basis for $\mathcal{H}$. Since
$$
l_k:=1+\exp\left(-\dfrac{k^2}{2}\right)>1
$$
for every $k$ and
$$
\lim _{k \rightarrow \infty} l_k=1,
$$
Theorem \ref{T4} proves the sequence
$
\left\{l_{k}^{\frac{1}{2}} e_k\right\}_{k=1}^{\infty}
$
is a compact-tight frame with bounds 1 and 2 .

\end{example}

 \begin{proposition}
 Let $\left\{g_k\right\}_{k=1}^{\infty}$ be a frame in $\mathcal{H}$ It is created from an orthonormal basis by repeatedly iterating a set number of times each basis member. Then, $\left\{g_k\right\}_{k=1}^{\infty}$ is a finite-rank-tight frame.
\end{proposition}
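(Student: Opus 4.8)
The plan is to compute the frame operator of $\{g_k\}_{k=1}^{\infty}$ explicitly in terms of the orthonormal basis it is built from, and then to read off a decomposition of the form demanded by Definition \ref{D42} in its finite-rank version. Let $\{e_k\}_{k=1}^{\infty}$ denote the orthonormal basis of $\mathcal{H}$ from which $\{g_k\}_{k=1}^{\infty}$ is obtained, and for each $k$ let $n_k\in\mathbb{N}$ be the number of copies of $e_k$ occurring in $\{g_k\}_{k=1}^{\infty}$; since the numbers of repetitions are prescribed, only finitely many basis vectors are repeated a number of times different from a common value $m\in\mathbb{N}$, i.e. there is a finite set $F$ with $n_k=m$ for $k\notin F$, and $n_k\ge 1$ for every $k$ because each basis vector occurs at least once. (The case $F=\emptyset$ is the $m$-tight frame, already handled by $S=0+mI$; the argument below treats it uniformly.)

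First I would identify the action of the frame operator. Enumerating $\{g_k\}$ block by block — the $n_1$ copies of $e_1$, then the $n_2$ copies of $e_2$, and so on — the partial sums of $\sum_{j}\langle f,g_j\rangle g_j$ taken at the block boundaries are precisely $\sum_{k=1}^{K} n_k\langle f,e_k\rangle e_k$; since $\{g_k\}$ is a frame the series converges, so letting $K\to\infty$ gives, for every $f\in\mathcal{H}$,
$$
Sf=\sum_{j=1}^{\infty}\langle f,g_j\rangle g_j=\sum_{k=1}^{\infty} n_k\langle f,e_k\rangle e_k
$$
(for an enumeration that is not block-wise, unconditional convergence of the Bessel expansion yields the same limit). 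Incidentally, since $1\le n_k\le\max\bigl(m,\max_{k\in F}n_k\bigr)<\infty$, the computation in the proof of Theorem \ref{T4} applies and re-derives that $\{g_k\}$ is a frame, with bounds $\inf_k n_k$ and $\sup_k n_k$.

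Next I would peel off a multiple of the identity. Setting $\xi:=m>0$ and
$$
\mathcal{K}:=\sum_{k=1}^{\infty}(n_k-m)\langle\,\cdot\,,e_k\rangle e_k=\sum_{k\in F}(n_k-m)\langle\,\cdot\,,e_k\rangle e_k ,
$$
the reconstruction identity $\sum_{k}\langle f,e_k\rangle e_k=f$ for the orthonormal basis $\{e_k\}$ gives $Sf=\mathcal{K}f+\xi f$ for all $f\in\mathcal{H}$, that is $S=\mathcal{K}+\xi I$. The operator $\mathcal{K}$ is a finite sum of the rank-one adjointable maps $f\mapsto\langle f,e_k\rangle e_k$ ($k\in F$), hence is an adjointable operator of finite rank; it is exactly of the form (\ref{eq41}) with $\alpha_k=n_k-m$, all but finitely many of which vanish. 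By the finite-rank version of Definition \ref{D42} this presents $\{g_k\}_{k=1}^{\infty}$ as a finite-rank-tight frame (a $(\xi,\mathcal{K})$-compact-tight frame in the terminology introduced after Proposition \ref{p34}), which is the claim.

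The only step needing genuine care is the regrouping of the frame sum in the second paragraph — justified either by the block-partial-sum argument above for the natural enumeration, or by unconditional convergence of $\sum_j\langle f,g_j\rangle g_j$ for Bessel sequences in Hilbert $C^{\ast}$-modules for an arbitrary one. The remaining points (the diagonalization of $S$ against $\{e_k\}$, the finite rank of $\mathcal{K}$, and the bounds) are routine.
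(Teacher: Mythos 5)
Your proof is correct and follows essentially the same route as the paper: compute the frame operator against the underlying orthonormal basis, write $S=\mathcal{K}+\xi I$ with $\mathcal{K}$ a finite sum of rank-one operators supported on the repeated basis vectors, and invoke the finite-rank version of Definition \ref{D42}. The only differences are cosmetic — you allow a common repetition count $m$ (the paper fixes $m=1$, i.e.\ $\xi=1$) and you spell out the rearrangement/convergence justification that the paper leaves implicit.
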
  
 \begin{proof}
 Let $\left\{e_1, e_2, \ldots\right\}$ be an orthonormal basis for $\mathcal{H}$. Assume that $\left\{g_k\right\}_{k=1}^{\infty}$ is obtained by repeating the basis elements to create the basis. $e_{j_1}, \ldots, e_{j_n}$, such that $e_{j_{\mathrm{m}}}$ is repeated $\theta_m>1$ times for each $m \in\{1, \ldots, n\}$. Then,
$$
\begin{aligned}
S f & =\sum_{k=1}^{\infty}\left\langle f, g_k\right\rangle g_k \\
& =\sum_{m=1}^n\left(\theta_m-1\right)\left\langle f, e_{j_m}\right\rangle e_{j_m} +\sum_{i=1}^{\infty}\left\langle f, e_i\right\rangle e_i\\
& =\mathcal{K}f+ f .
\end{aligned}
$$
Then,
$
\mathcal{K}=\sum_{m=1}^n\left(\theta_m-1\right)\left\langle\cdot, e_{j_m}\right\rangle e_{j_m}
$
is a finite-rank operator.
 \end{proof}
 
\begin{theorem}
 The canonical dual of a compact-tight frame, is compact-tight.
\end{theorem}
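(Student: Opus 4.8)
The plan is to reduce the statement to a fact about the inverse of the frame operator. Recall that if $\{f_k\}_{k=1}^{\infty}$ is a frame for $\mathcal{H}$ with frame operator $S$, then its \emph{canonical dual} is the sequence $\{S^{-1}f_k\}_{k=1}^{\infty}$, and I would first record that the frame operator of the canonical dual is $S^{-1}$: since $S$ is positive, self-adjoint and invertible, so is $S^{-1}$, and hence for every $f\in\mathcal{H}$,
$$\sum_{k=1}^{\infty}\bigl\langle f,S^{-1}f_k\bigr\rangle S^{-1}f_k=S^{-1}\sum_{k=1}^{\infty}\bigl\langle S^{-1}f,f_k\bigr\rangle f_k=S^{-1}SS^{-1}f=S^{-1}f.$$
So it is enough to prove that $S^{-1}$ is itself of the form (\ref{ED1}), i.e. a compact perturbation of a positive multiple of the identity.

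Write $S=\mathcal{K}+\xi I$ with $\mathcal{K}\in\operatorname{End}_{\mathcal{A}}^{*}(\mathcal{H})$ compact and $\xi>0$, as in Definition \ref{D42}. The computation I have in mind is the resolvent-type identity
$$S^{-1}-\frac{1}{\xi}I=\frac{1}{\xi}\bigl(\xi S^{-1}-I\bigr)=\frac{1}{\xi}S^{-1}\bigl(\xi I-S\bigr)=-\frac{1}{\xi}\,S^{-1}\mathcal{K},$$
using $\xi I-S=-\mathcal{K}$. Hence $S^{-1}=\mathcal{L}+\xi^{-1}I$, where $\mathcal{L}:=-\xi^{-1}S^{-1}\mathcal{K}$.

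It remains to check that $\mathcal{L}$ is compact, and this is the only delicate point. I would invoke the standard fact that the compact operators on the Hilbert $\mathcal{A}$-module $\mathcal{H}$ — the norm-closure of the finite-rank adjointable operators — form a closed two-sided ideal of $\operatorname{End}_{\mathcal{A}}^{*}(\mathcal{H})$; consequently the product $S^{-1}\mathcal{K}$ is compact, and hence so is $\mathcal{L}$. Thus $S^{-1}$ satisfies (\ref{ED1}) with compact part $\mathcal{L}$ and positive scalar $\xi^{-1}$, i.e. the canonical dual $\{S^{-1}f_k\}_{k=1}^{\infty}$ is a compact-tight frame (and, by Proposition \ref{p34}, this representation is unique). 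The same argument, with ``finite-rank'' replacing ``compact'' throughout and using that the finite-rank operators also form a two-sided ideal, yields the analogous statement for finite-rank-tight frames. The main obstacle is precisely securing this ideal property in the $C^{*}$-module setting; once it is in hand, the rest is the one-line algebraic identity above.
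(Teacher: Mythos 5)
Your proof is correct and follows essentially the same route as the paper: both decompose $S^{-1}=\xi^{-1}I+(\text{compact})$ via the identity $S^{-1}-\xi^{-1}I=-\xi^{-1}S^{-1}\mathcal{K}$ (the paper writes the compact part as the solution of $\mathcal{T}S=-\xi^{-1}\mathcal{K}$, which is the same operator since $\mathcal{K}$ and $S^{-1}$ commute) and then invoke the ideal property of the compact operators in $\operatorname{End}_{\mathcal{A}}^{*}(\mathcal{H})$. Your version is in fact more complete, since you also verify that the frame operator of the canonical dual is $S^{-1}$, a step the paper omits.
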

\begin{proof}
 Let $\left\{f_k\right\}_{k=1}^{\infty}$ be a $(\xi, \mathcal{K})$-compact-tight frame such that the frame operator $S=\mathcal{K}+\xi I$.

In which case $\xi \neq 0$ and $\mathcal{K}$ is a compact operator. Then,
$$
S^{-1}=\mathcal{T}+\xi^{-1} I,
$$
where $\mathcal{T}$ is a compact operator. So all we have to do is choose $\mathcal{T}$ in such a way that
$$
\mathcal{T} S=-\xi^{-1} \mathcal{K} .
$$
So it is evident that $\mathcal{T}$ is a compact operator. Furthermore,
$$
\begin{aligned}
\left(\mathcal{T}+\xi^{-1} I\right) S & =\mathcal{T} S +\xi^{-1} S\\
& =\xi^{-1}(\mathcal{K}+\xi I)-\xi^{-1} \mathcal{K} \\
& =I .
\end{aligned}
$$
\end{proof}
\begin{theorem}\label{T49}
Let $\mathfrak{F}=\left\{f_k\right\}_{k=1}^{\infty}$ and $\mathfrak{G}=\left\{\tilde{f}_k\right\}_{k=1}^{\infty}$ be $\left(1, \mathcal{K}_1\right)$ - and $\left(1, \mathcal{K}_2\right)$ compact-tight frames, respectively. If there erists an infinite subset $\sigma$ of $\mathbb{N}$ such that $\left\{f_k\right\}_{k \in \sigma}$ and $\left\{\tilde{f}_k\right\}_{k \in \sigma^c}$ are orthonormal bases. Then, $\left\{f_k\right\}_{k=1}^{\infty}$ and $\left\{\tilde{f}_k\right\}_{k=1}^{\infty}$ cannot be woven.
\end{theorem}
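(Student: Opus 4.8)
The plan is to produce a single partition of $\mathbb{N}$ for which the associated weaving fails to be a frame; by the definition of woven families this already forces $\mathfrak{F}$ and $\mathfrak{G}$ not to be woven.

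First I would extract two reconstruction identities from the hypotheses. Since $\{f_k\}_{k\in\sigma}$ is an orthonormal basis, every $f\in\mathcal{H}$ satisfies $f=\sum_{k\in\sigma}\langle f,f_k\rangle f_k$, while, because $\mathfrak{F}$ is $(1,\mathcal{K}_1)$-compact-tight, $S_{\mathfrak{F}}f=\sum_{k=1}^{\infty}\langle f,f_k\rangle f_k=\mathcal{K}_1 f+f$. Subtracting gives $\sum_{k\in\sigma^{c}}\langle f,f_k\rangle f_k=\mathcal{K}_1 f$ for all $f$. In the same way, using that $\{\tilde f_k\}_{k\in\sigma^{c}}$ is an orthonormal basis and $S_{\mathfrak{G}}=\mathcal{K}_2+I$, I obtain $\sum_{k\in\sigma}\langle f,\tilde f_k\rangle\tilde f_k=\mathcal{K}_2 f$ for all $f$.

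Next, take the partition $\{\sigma^{c},\sigma\}$ of $\mathbb{N}$ and form the weaving $\mathfrak{W}=\{f_k\}_{k\in\sigma^{c}}\cup\{\tilde f_k\}_{k\in\sigma}$. It is a Bessel sequence, being the union of two Bessel subfamilies, so its frame-operator candidate $S_{\mathfrak{W}}$ is a well-defined bounded operator, and by the two identities above $S_{\mathfrak{W}}f=\sum_{k\in\sigma^{c}}\langle f,f_k\rangle f_k+\sum_{k\in\sigma}\langle f,\tilde f_k\rangle\tilde f_k=(\mathcal{K}_1+\mathcal{K}_2)f$, a compact operator. If $\mathfrak{W}$ were a frame, then $S_{\mathfrak{W}}=\mathcal{K}_1+\mathcal{K}_2$ would be its frame operator, hence (recall that the frame operator of a frame is positive, self-adjoint and invertible) a positive invertible operator; equivalently there would exist $C>0$ with $C\langle f,f\rangle\le\langle S_{\mathfrak{W}}f,f\rangle$ for all $f$.

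To close, I would test this lower bound against the orthonormal sequence $\{f_k\}_{k\in\sigma}$: since $\langle f_k,f_k\rangle=1_{\mathcal{A}}$, one gets $C\,1_{\mathcal{A}}\le\langle(\mathcal{K}_1+\mathcal{K}_2)f_k,f_k\rangle$, hence $C\le\|(\mathcal{K}_1+\mathcal{K}_2)f_k\|$. But a compact operator sends an orthonormal basis to a norm-null sequence — this holds for finite-rank operators because $\|\langle y,f_k\rangle\|\to0$ (the series of positive elements $\sum_k\langle y,f_k\rangle\langle f_k,y\rangle=\langle y,y\rangle$ converges), and then for all compact operators by norm approximation — so $\|(\mathcal{K}_1+\mathcal{K}_2)f_k\|\to0$, contradicting $C>0$. (Equivalently, a compact operator on the infinite-dimensional module $\mathcal{H}$ cannot be invertible, since $I=S_{\mathfrak{W}}^{-1}S_{\mathfrak{W}}$ would then be compact, whereas $\|I-\sum_{k=1}^{n}\langle\cdot,e_k\rangle e_k\|\ge1$ for every $n$ and any orthonormal basis $\{e_k\}$.) Hence the weaving for this partition admits no universal lower frame bound, so $\mathfrak{F}$ and $\mathfrak{G}$ cannot be woven. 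The only step requiring care is this last one, ruling out a nonzero compact operator as a frame operator, which relies precisely on $\mathcal{H}$ being infinite dimensional via the infinite orthonormal basis $\{f_k\}_{k\in\sigma}$; everything else is bookkeeping with the reconstruction formulas.
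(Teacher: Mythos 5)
Your argument is the same as the paper's: for the partition $\{\sigma^c,\sigma\}$ you use the orthonormal-basis reconstruction formulas to show the weaving's operator collapses to the compact operator $\mathcal{K}_1+\mathcal{K}_2$, which cannot be the (invertible, bounded-below) frame operator of a frame. The only difference is that you actually justify the final contradiction — that a compact operator on an infinite-dimensional Hilbert $C^{\ast}$-module cannot be bounded below, via the norm-null image of the orthonormal sequence $\{f_k\}_{k\in\sigma}$ — whereas the paper simply declares it ``absurd,'' so your write-up is, if anything, more complete.
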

\begin{proof}
Assume on the contrary that $\left\{f_k\right\}_{k=1}^{\infty}$ and $\left\{\tilde{f}_k\right\}_{k=1}^{\infty}$ are woven.
 Then, for the partition $\left\{\sigma, \sigma^c\right\}$, we obtain the frame $\left\{f_k\right\}_{k \in \sigma^c} \cup\left\{\tilde{f}_k\right\}_{k \in \sigma}$. According to the definition \ref{D42}, the frame operators for $\left\{f_k\right\}_{k-1}^{\infty}$ and $\left\{\tilde{f}_k\right\}_{k=1}^{\infty}$ can be written as
$$
\begin{aligned}
S_F f & =\sum_{k=1}^{\infty}\left\langle f, f_k\right\rangle f_k \\
& =\sum_{k \in \sigma}\left\langle f, f_k\right\rangle f_k+\sum_{k \in \sigma^c}\left\langle f, f_k\right\rangle f_k \\
& =\mathcal{K}_1 f+f
\end{aligned}
$$
and
$$
\begin{aligned}
S_G f & =\sum_{k=1}^{\infty}\left\langle f, \tilde{f}_k\right\rangle \tilde{f}_k \\
& =\sum_{k \in \sigma^c}\left\langle f, \tilde{f}_k\right\rangle \tilde{f}_k+\sum_{k \in \sigma}\left\langle f, \tilde{f}_k\right\rangle \tilde{f}_k \\
& =\mathcal{K}_2 f+f .
\end{aligned}
$$
Make clear the frame operator of $\left\{f_k\right\}_{k \in \sigma^c} \cup\left\{\tilde{f}_k\right\}_{k \in \sigma}$ by $S_{F+G}$. Then for $f \in \mathcal{H}$,
$$
\begin{aligned}
S_{F+G} f & =\sum_{k \in \sigma^c}\left\langle f, f_k\right\rangle f_k+\sum_{k \in \sigma}\left\langle f, \tilde{f}_k\right\rangle \tilde{f}_k \\
& =\mathcal{K}_1 f+\mathcal{K}_2 f + f \\
& =\left(\mathcal{K}_1+\mathcal{K}_2\right) f +f.
\end{aligned}
$$
Thus, $$S_{F+G}=\mathcal{K}_1+\mathcal{K}_2.$$ Since $\mathcal{K}_1$ and $\mathcal{K}_2$ are compact operators, $S_{F+G}$ is a compact operator, which is absurd.
\end{proof}
\medskip

\section*{Declarations}

\medskip

\noindent \textbf{Availability of Data and Materials}\newline
\noindent Not applicable.

\medskip

\noindent \textbf{Ethics Approval and Consent to Participate}\newline
\noindent It is important to note that this article does not involve any studies with animals or human participants.

\medskip

\noindent \textbf{Competing Interests}\newline
\noindent The authors have no conflicts of interest to declare.

\medskip

\noindent \textbf{Funding}\newline
\noindent There are no financial sources to declare for this paper.

\medskip

\noindent \textbf{Authors' Contributions}\newline
\noindent All authors have equally contributed to the conception and design of the study, drafting of the manuscript, sequence alignment, and have read and approved the final manuscript.

\medskip


\begin{thebibliography}{99}

\bibitem{Dau} Daubechies I., Grossmann A., Meyer Y., 1986. Painless nonorthogonal expansions, J. Math. Phys.
27, 1271–1283.

\bibitem{Duf} Duffin R. J, Schaeffer A. C, Trans. Amer. Math. Soc. 72,  341-366, 1952.
Doi: doi.org/10.1090/S0002-9947-1952-0047179-6

\bibitem{Ghiati} Ghiati, M., Rossafi, M., Mouniane, M. et al. Controlled continuous 
$\ast$-g-frames in Hilbert C*-modules. J. Pseudo-Differ. Oper. Appl. 15, 2 (2024). https://doi.org/10.1007/s11868-023-00571-1

\bibitem{Kap} Kaplansky I, \emph{Modules over operator algebras}, Amer. J. Math. 75, 839-858, 1953.
Doi: doi.org/10.2307/2372552

\bibitem{Kho2} Khorsavi A, Khorsavi B, \emph{Fusion frames and g-frames in Hilbert $\mathcal{C}^{\ast}$-modules}, Int. J. Wavelet, Multiresolution
and Information Processing 6, 433-446, 2008.
Doi: doi.org/10.1142/S0219691308002458

 \bibitem{Mass}  H. Massit, M. Rossafi, C. Park, Some relations between continuous generalized frames. Afr. Mat. 35, 12 (2024). https://doi.org/10.1007/s13370-023-01157-2

\bibitem{Pas}Paschke, W.: Inner product modules over B* -algebras. Trans. Am. Math. Soc. 182, 443–468 (1973)

\bibitem{r1} Rossafi M, Kabbaj S, $\ast$-K-operator Frame for $End_{\mathcal{A}}^{\ast}(\mathcal{H})$,  Asian-Eur. J. Math. 13 (2020).
Doi: doi.org/10.1142/S1793557120500606.

\bibitem{ROSS} Rossafi, M., Ghiati, M., Mouniane, M., Chouchene, F., Touri, A., Kabbaj, S.: Continuous frame in Hilbert C*-modules. J. Anal. (2023). https://doi.org/10.1007/s41478-023-00581-8

\bibitem{ROS} Rossafi, M., Nhari, F. D., Park, C., Kabbaj, S.: Continuous g-frames with C*-valued bounds and their properties. Complex Anal. Operator Theory 16, 44 (2022). https://doi.org/10.1007/s11785-022-01229-4

\bibitem{Sah} N. K. Sahu, Controlled g-frames in Hilbert C*-modules, Mathematical Analysis and its Contemporary Applications, 3 (2021), 65–82.

\bibitem{Zah} X. Zhao, P. Li, Weaving Frames in Hilbert C*- Modules, Hindawi
Journal of Mathematics
Volume 2021, Article ID 2228397, 13 pages, https://doi.org/10.1155/2021/2228397. 
 

 
	\end{thebibliography}
\end{document}